\newtheorem{theorem}{Theorem}
\newtheorem{lemma}{Lemma}
\def\PROB {{\mathbb P}}
\def\EXP {{\mathbb E}}
\def\IND{{\mathbb I}}
\def\Var{{\mathbb Var}}
\def\R{{\mathbb R}}
\def \eL {\stackrel{{\cal L}}{=}}
\def\esssup{\mathop{\rm ess\, sup}}
\begin{document}
\title{On the consistency of the Kozachenko-Leonenko entropy estimate}
\author{Luc Devroye\thanks{Luc Devroye, School of Computer Science, McGill University, Montreal, Canada. lucdevroye@gmail.com.
Luc's research was supported by a Discovery Grant from NSERC.}
\and
L\'aszl\'o Gy\"orfi\thanks{L\'aszl\'o Gy\"orfi, Department of Computer Science and Information Theory, Budapest University of Technology and Economics, Budapest, Hungary. gyorfi@cs.bme.hu.}}

\maketitle
\begin{abstract}
We revisit the problem of the estimation of the differential entropy
$H(f)$ of a random vector $X$ in $R^d$
with density $f$, assuming that $H(f)$ exists and is finite.
In this note, we study the consistency of the popular nearest neighbor
estimate $H_n$
of Kozachenko and Leonenko.
Without any smoothness condition we show that the estimate is consistent ($E\{|H_n - H(f)|\} \to 0$ as
$n \to \infty$)
if and only if $\EXP \{ \log ( \| X \| + 1 )\} < \infty$.
Furthermore, if $X$ has compact support, then $H_n \to H(f)$ almost surely.
\end{abstract}

{\sc Index terms}: differential entropy estimate,   consistency conditions

\section{Introduction}

The differential entropy of a random $\R^d$-valued vector $X$ with  probability density function  $f$ is
\begin{equation}
\label{entropy}
H(f) = - \int f(x) \ln f(x) dx =-\EXP\{f(X) \}
\end{equation}
when this integral exists.

The objective of this paper is to study an estimate of (\ref{entropy}) based on independent and identically distributed samples $X_1,\dots X_n$, with density $f$.

Estimation of the differential entropy has a long history.
The early part of that story was described by  Beirlant et al. \cite{BeDuGyvdM97}.
Since then, there has been considerable activity on the topic.
There are estimates based
on kernel methods
(Joe \cite{Joe89};
Gy\"orfi and van der Meulen \cite{GyvdM91};
Hall and Morton \cite{HaMo93};
Shwartz et al. \cite{ShZiSc05};
Paninski and Yajima \cite{PaYa08};
Krishnamurthy et al. \cite{KrKaPoWa14};
Kandasamy et al. \cite{KaKrPoWa15}),
on wavelet functions,
(Donoho et al. \cite{DoJoKePi96};
Delyon et al. \cite{DeJu96}, and
Chesneau et al. \cite{ChNaSe17}),
on partitioning methods
(Stowell et al. \cite{StPl09}),
and on nearest neighbour strategies
(Kozachenko and Leonenko \cite{KoLe87};
Tsybakov and Van der Meulen \cite{TsvM94};
Singh et al. \cite{SiMiHnFeDe03};
Sricharan, Raich, and Hero \cite{SrRaHe12};
Sricharan, Wei, and Hero \cite{SrWeHe13};
Singh and  P\'oczos \cite{SiPo16};
Gao, Oh, and Viswanath \cite{GaOhVi16} and \cite{GaOhVi17};
Delattre and Fournier \cite{DeFo18};
Lord, Sun, and Bollt \cite{LoSuBo18};
Berrett, Samworth, and Yuan \cite{BeSaYu17}).
Other related work deals with general properties of functional estimation (see, e.g.,
Birg\'e and Massart \cite{BiMa95})
or sufficient conditions of consistency of the so-called integral estimate $\int f_n \log (1/f_n)$ when
$f_n$ is a general density estimate (see, e.g.,  Godavarti \cite{God04}).

There has been particular interest in minimax rates of convergence,
culminating in the paper by Han, Jiao, Weissman and Wu \cite{HaJiWeWu20} who
obtained the minimax rates for classes of densities on the unit cube of $\R^d$ that are
Besov balls or H\"older (Lipschitz) balls of smoothness parameters $s$. Their rates
are of exact order $\max ( 1/(n \log n)^{s/(s+d)} , 1/\sqrt{n} )$. For
standard Lipschitz densities, the minimax rate is of exact order $1/\sqrt{n}$
(achieving the parametric rate) for
dimension one, but it is $1/(n \log n)^{-1/(d+1)}$ when $d > 1$.
They also construct a kernel-based  estimate that is minimax optimal for these classes.

Most of the previous work assumes compact support, and thus sidesteps
the thorny issue of infinite tails.
The objective of this note is merely to give a complete characterization of the
consistency of the popular Kozachenko-Leonenko estimate \cite{KoLe87} for estimating
the differential entropy.
Without any smoothness condition on the density $f$ we study three types of consistency: strong, $L_1$ and weak.

It is an open research problem, whether there exists an entropy estimate such that it is consistent under the only condition that $H(f)$ is finite.
The most obvious way of estimating the differential entropy is the partitioning-based estimate.
If the corresponding partition is deterministic, then we conjecture that there exist no deterministic partitions
such that the relating differential entropy estimate is a.s. consistent for any finite differential entropy.
The significant breakthrough in this respect is due to Wang, Kulkarni, Verdu \cite{WaKuVe05} and Silva, Narayanan \cite{SiNa10}.
They suggested data-dependent partitioning, for which the partitioning-based estimates of  Kullback-Leibler (KL) divergence and of mutual information are strongly consistent under the only condition that the KL-divergence and the mutual information are finite, respectively.
We guess that a universally consistent entropy estimate can be derived from data dependent partitioning.

For the  cross-validation estimate or leave-one-out entropy estimate,
$f_{n,i}$ denotes a density estimate based
on $X_1,\dots X_n$ leaving $X_i$ out, and the corresponding entropy
estimate is of the form
\begin{equation}
H_n=-\frac 1n \sum_{i=1}^n\ln f_{n,i}(X_i). \label{CVE}
\end{equation}
Kozachenko and Leonenko \cite{KoLe87} introduced the
nearest neighbor entropy estimate  as follows. Let $R_{n,i}(x)$, $x\in \R^d$, be defined by
\begin{align*}
R_{n,i}(x)=\min _{j\ne i, j\le n}\|x-X_j\|,
\end{align*}
where $\| \cdot \|$ denotes the Euclidean norm.
Then the  nearest neighbor entropy estimate is
\begin{align}
\label{NN}
H_n=\frac 1n \sum_{i=1}^{n}\ln ((n-1)R_{n,i}(X_i)^dv_d)+ C_E,
\end{align}
where  $C_E=-\int_0^{\infty} e^{-t}\ln tdt=0.5772...$ is the Euler-Mascheroni constant and $v_d$ denotes the volume of the unit sphere in $\R^d$.
The estimate in (\ref{NN}) has the form of (\ref{CVE}) if $f_{n,i}$ is a constant multiple of the first-nearest-neighbor (1-NN) density estimate:
\begin{align*}
f_{n,i}(x)=\frac{1}{(n-1)\min _{j\ne i, j\le n}\|x-X_j\|^dv_de^{C_E}}.
\end{align*}
Notice that this particular $f_{n,i}$ is not consistent in $L_1$, because
\begin{align*}
\int f_{n,i}(x)dx=\infty.
\end{align*}
Furthermore, $f_{n,i}(x)$ is unbounded at the $X_j, j\ne i$.
Also, $f_{n,i}(x)$ does not in general tend to $f(x)$ in probability, i.e., the density estimates $f_{n,i}$  are not weakly consistent.

Under some mild conditions on the density $f$, Kozachenko and Leonenko \cite{KoLe87}
proved the mean square consistency.
Biau and Devroye \cite{BiDe15} showed that for bounded $X$, if $\int f(x)\ln^2 (f(x)+1)dx<\infty$, then $H_n\to H(f)$ in probability.

For smooth densities,
Berrett,  Samworth and Yuan \cite{BeSaYu17},
Delattre  and Fournier \cite{DeFo18}
and
Tsybakov and van der Meulen \cite{TsvM94} studied the rate of convergence of $H_n$ and of its extensions to many nearest neighbors.

In this paper we show that for bounded $X$ the Kozachenko-Leonenko estimate is strongly consistent.
Furthermore, give a necessary and sufficient tail condition for $L_1$ consistency.
In addition, construct a counterexample on weak consistency for a uniform density on an unbounded set.
The proofs are presented in the last section.

\section{Consistency results}

For bounded $X$, without any smoothness condition on the density $f$ the Kozachenko-Leonenko estimate is strongly consistent:
\begin{theorem}
\label{cons}
If the support of $f$ is  bounded and
\begin{align}
\label{l1}
\int f(x)\ln (f(x)+1)dx<\infty,
\end{align}
then
\begin{align}
\label{Mas}
\lim_{n\rightarrow \infty }H_n=H(f)
\end{align}
a.s.
\end{theorem}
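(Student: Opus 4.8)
The plan is to separate the part of $H_n$ governed by a law of large numbers from the part that depends on the local behaviour of the nearest-neighbour balls. Write $\lambda$ for Lebesgue measure and $\mu$ for the law of $X$, put $B_{n,i}=B(X_i,R_{n,i}(X_i))$, and set
\[
Z_{n,i}=(n-1)R_{n,i}(X_i)^d v_d\, f(X_i),\qquad W_{n,i}=(n-1)\mu(B_{n,i}).
\]
Because $\lambda(B_{n,i})=R_{n,i}(X_i)^d v_d$ we have $\ln\!\big((n-1)R_{n,i}(X_i)^d v_d\big)=\ln Z_{n,i}-\ln f(X_i)$, and $Z_{n,i}=W_{n,i}\rho_{n,i}$ with $\rho_{n,i}=f(X_i)\lambda(B_{n,i})/\mu(B_{n,i})$. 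Hence
\[
H_n-H(f)=\Big(-\tfrac1n\sum_{i=1}^n\ln f(X_i)-H(f)\Big)+\Big(\tfrac1n\sum_{i=1}^n\ln W_{n,i}+C_E\Big)+\tfrac1n\sum_{i=1}^n\ln\rho_{n,i}.
\]
The first bracket tends to $0$ almost surely by the strong law of large numbers, once one checks $\EXP\{|\ln f(X)|\}<\infty$: the condition (\ref{l1}) bounds $\EXP\{(\ln f(X))^+\}$, and the bounded support bounds $\EXP\{(\ln f(X))^-\}$ since $u\mapsto u\ln(1/u)$ is bounded on $(0,1)$.

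For the second bracket I would use the exact law of $W_{n,i}$. Conditioning on $X_i=x$, each $\mu(B(x,\|x-X_j\|))$ with $j\ne i$ is uniform on $[0,1]$ by the probability integral transform (the map $r\mapsto\mu(B(x,r))$ being continuous because $\mu\ll\lambda$), so $\mu(B_{n,i})$, the minimum of these $n-1$ independent uniforms, has a $\mathrm{Beta}(1,n-1)$ law that does not depend on $x$. Consequently $\EXP\{\ln W_{n,i}\}=\ln(n-1)-\sum_{k=1}^{n-1}1/k\to -C_E$, so the expectation of the second bracket tends to $0$; what is left for this term is the almost-sure concentration treated below.

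For the third bracket, write $\rho_{n,i}=f(X_i)/\bar f_{n,i}$, where $\bar f_{n,i}=\mu(B_{n,i})/\lambda(B_{n,i})$ is the average of $f$ over $B_{n,i}$. Since $R_{n,i}(X_i)\to0$ almost surely, the Lebesgue differentiation theorem gives $\bar f_{n,i}\to f(X_i)$, hence $\ln\rho_{n,i}\to0$, at $\mu$-almost every point. To turn this pointwise statement into convergence of the Ces\`aro average I would prove that the array $\{\ln\rho_{n,i}\}$ is uniformly integrable: its positive part is dominated through $(\ln f)^+$, controlled by (\ref{l1}), while its negative part is dominated by $(\ln f)^-$ (finite by the bounded support) together with an $L\log L$ Hardy--Littlewood maximal bound on $\bar f_{n,i}$, which (\ref{l1}) again supplies. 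This is the step where both hypotheses are genuinely used to pass the local limit through the averaging.

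The \emph{main obstacle} is the almost-sure statement, i.e.\ showing $H_n-\EXP\{H_n\}\to0$ a.s. The relevant geometric fact is that in $\R^d$ any fixed point is the nearest neighbour of at most a dimensional constant $\gamma_d$ of the sample points, so replacing a single $X_k$ alters only $O(1)$ of the summands; $H_n$ therefore has bounded influence, except that each affected summand $\ln Z_{n,i}$ is unbounded --- very negative when two points nearly coincide and large when a point is isolated. I would handle this by truncating $\ln Z_{n,i}$ at a slowly growing level $\pm a_n$ (for instance $a_n=\log n$), letting $T_n$ denote the resulting truncated version of $\tfrac1n\sum_{i=1}^n\ln Z_{n,i}$: the per-coordinate variation of $T_n$ is $O(a_n/n)$, so the bounded-difference inequality of McDiarmid gives $\PROB\{|T_n-\EXP\{T_n\}|>\varepsilon\}\le 2\exp(-c\,n\varepsilon^2/a_n^2)$, which is summable in $n$ and yields almost-sure convergence by Borel--Cantelli. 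The truncation remainder is controlled separately: the $\mathrm{Beta}(1,n-1)$ law shows that both tails of $\ln W_{n,i}$ are exponential, so $\EXP\{|\ln Z_{n,i}|\,\IND_{\{|\ln Z_{n,i}|>a_n\}}\}$ is negligible, and a further Borel--Cantelli argument removes it almost surely. Making the near-coincident-pairs contribution (the $-\infty$ tail) uniform enough to serve both the analysis of $\rho_{n,i}$ above and this concentration step is the crux of the proof.
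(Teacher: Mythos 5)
Your decomposition is essentially the paper's in disguise: the paper writes $H_n-H(f)=(\tilde H_n-H(f))+(M_n+C_E)$ with $M_n=\frac1n\sum_i\ln((n-1)\mu(B(X_i,R_{n,i}(X_i))))$, and your second bracket is exactly $M_n+C_E$ while your first and third brackets together are $\tilde H_n-H(f)$. The $\mathrm{Beta}(1,n-1)$ law of $\mu(B_{n,i})$ and the cone constant $\gamma_d$ bounding the influence of a single sample point are also the paper's ingredients. Where you diverge is in the tools: McDiarmid after truncation at level $a_n=\log n$ in place of the fourth-moment Efron--Stein inequality (Lemma \ref{exp}) for the concentration, and uniform integrability plus that same concentration step in place of Breiman's generalized ergodic theorem (Lemma \ref{Breiman}) for the Ces\`aro average of $\ln\rho_{n,i}$. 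For the part carrying the universal Beta law these substitutions are workable (with $a_n$ a slightly larger multiple of $\log n$ so that the union bound over the $n$ remainder events is summable).

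The genuine gap is in your control of $\ln\rho_{n,i}=\ln\bigl(f(X_i)/\bar f_{R_{n,i}(X_i)}(X_i)\bigr)$. You assert that its positive part ``is dominated through $(\ln f)^+$''; this is false. The positive part is large precisely when the ball average $\bar f_{R_{n,i}(X_i)}(X_i)$ is much \emph{smaller} than $f(X_i)$ (a spike of $f$ surrounded by near-vacuum), e.g.\ $f(X_i)=1$ and $\bar f_{R_{n,i}(X_i)}(X_i)=10^{-100}$ gives $(\ln\rho_{n,i})^+=100\ln 10$ while $(\ln f(X_i))^+=0$. What is needed is integrability of the logarithm of the \emph{lower} maximal function $g^*(x)=\sup_{0<h<2L}1/\bar f_h(x)$, i.e.\ $\int f\ln(g^*+1)\,dx<\infty$; this is the paper's inequality (\ref{MM}), it is nontrivial, and it is the \emph{only} place where the bounded support is genuinely used (you instead attribute the role of compactness to the elementary bound on $(\ln f)^-$, a far weaker use of it). The $L\log L$ maximal theorem you invoke controls $f^*=\sup_h\bar f_h$ from above and says nothing about $\bar f_h$ from below. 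Without (\ref{MM}) neither your uniform-integrability claim for $\{\ln\rho_{n,i}\}$ nor the removal of the truncation remainder goes through: your stated justification for the remainder (``both tails of $\ln W_{n,i}$ are exponential'') covers only the distribution-free Beta factor, whereas the remainder coming from $\ln\rho_{n,i}$ has no exponential tail and must be absorbed by an integrable envelope of the form $\sup_n|\ln\rho_{n,1}|\le\ln(f^*+1)+\ln(g^*+1)+|\ln f|$ --- which again requires (\ref{MM}). A secondary point: uniform integrability of the array yields only $L_1$, hence in-probability, convergence of $\frac1n\sum_i\ln\rho_{n,i}$, so the almost-sure conclusion for that bracket rests entirely on the truncated McDiarmid step, which makes the missing envelope unavoidable rather than cosmetic.
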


\bigskip

Next, we present a necessary and sufficient tail condition on the consistency in $L_1$:

\begin{theorem}
\label{weak}
Assume that $H(f)$ is finite. For any density $f$,
\begin{align}
\label{L1}
\lim_{n\rightarrow \infty }\EXP\{|H_n-H(f)|\}=0
\end{align}
if and only if
\begin{align}
\label{llog}
\EXP\{(\ln \|X\|)^+\}<\infty.
\end{align}
\end{theorem}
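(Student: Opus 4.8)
The plan is to treat the bias $\EXP\{H_n\}-H(f)$ and the fluctuation $H_n-\EXP\{H_n\}$ separately, and to run the necessity direction through a direct lower bound on the positive part of a single summand. By exchangeability, $\EXP\{H_n\}=C_E+\EXP\{\ln((n-1)v_d R_{n,1}(X_1)^d)\}$, so I first condition on $X_1=x$ and write, with $\mu$ the law of $X$ and $U:=\mu(B(x,R_{n,1}(x)))$ the $\mu$-mass of the nearest-neighbour ball,
\begin{align*}
\ln((n-1)v_d R_{n,1}(x)^d)=\ln((n-1)U)+\ln\frac{v_d R_{n,1}(x)^d}{\mu(B(x,R_{n,1}(x)))}.
\end{align*}
For nonatomic $\mu$ the variable $U$ is $\mathrm{Beta}(1,n-1)$, so the first term has the deterministic mean $a_n=\ln(n-1)-\sum_{k=1}^{n-1}1/k\to-C_E$, absorbing the Euler--Mascheroni constant. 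Since $R_{n,1}(x)\to0$ almost surely on the support, the Lebesgue density theorem gives $v_d R_{n,1}(x)^d/\mu(B(x,R_{n,1}(x)))\to1/f(x)$ for a.e.\ $x$, so the conditional mean $b_n(x)$ of the second term should converge to $-\ln f(x)$; integrating against $f$ would then yield $\EXP\{H_n\}\to H(f)$.

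To justify $\int f\,b_n\to H(f)$ I would establish uniform integrability by dominating $b_n(x)$ by an $f$-integrable function, uniformly in $n$. The cleanest route is the integrated identity
\begin{align*}
\EXP\{(\ln((n-1)v_d R_{n,1}(X_1)^d))^+\}=\int_1^\infty\EXP\{(1-\mu(B(X_1,r_u)))^{n-1}\}\frac{du}{u},\quad r_u=\Big(\tfrac{u}{(n-1)v_d}\Big)^{1/d},
\end{align*}
which I would bound from above using the purely geometric inequality $\mu(B(x,\|x\|/2))\le\PROB\{\|X\|\ge\|x\|/2\}$, valid because $\|y-x\|\le\|x\|/2$ forces $\|y\|\ge\|x\|/2$. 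This links the upper tail of the summand directly to $\EXP\{(\ln\|X\|)^+\}$, while the lower tail (small $R$, dense regions) is controlled by condition (\ref{l1}), which follows from finiteness of $H(f)$. With this domination the bias converges. For the fluctuation I would truncate each summand at a level $\pm T$; the truncated average concentrates by the Efron--Stein inequality, using the geometric fact that in $\R^d$ a fixed point is the nearest neighbour of at most $\gamma_d$ of the others, so replacing one sample changes at most $\gamma_d+1$ of the $R_{n,i}(X_i)$ and the bounded differences give $\Var$ of order $1/n$. The truncation error is handled in $L_1$ by the same domination, letting $T\to\infty$ after $n\to\infty$; together with the bias this gives $\EXP\{|H_n-H(f)|\}\to0$.

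For necessity I would show that if $\EXP\{(\ln\|X\|)^+\}=\infty$ then $\EXP\{H_n\}=+\infty$ for every $n$, so $L_1$ consistency cannot hold. Conditioning on $X_1=x$ with $\|x\|>2$ and restricting the radius to $r\le\|x\|/2$, the same geometric inclusion gives $\mu(B(x,r))\le\PROB\{\|X\|\ge\|x\|/2\}=:q(\|x\|/2)$, whence
\begin{align*}
\EXP\{(\ln R_{n,1}(x))^+\}\ge\int_1^{\|x\|/2}(1-\mu(B(x,r)))^{n-1}\frac{dr}{r}\ge(1-q(\|x\|/2))^{n-1}\ln(\|x\|/2).
\end{align*}
For each fixed $n$ the prefactor is bounded away from $0$ once $\|x\|$ is large, while $\int_{\|x\|\ge\rho}f(x)\ln(\|x\|/2)\,dx=\infty$ on the far tail; hence the positive part of the single summand has infinite mean. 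Since $H(f)<\infty$ forces the negative part to be finite (via $\int f(\ln f)^+<\infty$), we obtain $\EXP\{H_n\}=+\infty$, and therefore $\EXP\{|H_n-H(f)|\}=\infty$.

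I expect the main obstacle to be the uniform-in-$n$ domination underlying the bias: the factor $(n-1)$ grows while $R_{n,1}(X_1)^d$ shrinks, and only their product is stable, so every estimate must preserve this cancellation rather than bound the two factors separately. Matching the upper-tail contribution \emph{exactly} to $\EXP\{(\ln\|X\|)^+\}$—tightly enough that the condition comes out both sufficient and necessary—is the delicate point, and the geometric inequality $\mu(B(x,\|x\|/2))\le\PROB\{\|X\|\ge\|x\|/2\}$ is what makes this linkage work in both directions.
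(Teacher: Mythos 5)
Your necessity argument is essentially sound and close in spirit to the paper's (the paper conditions $X_2,\dots ,X_n$ into a ball of $\mu$-mass $1/2$ to force $R_{n,1}(X_1)\ge \|X_1\|/2$; your integral representation combined with the inclusion $\mu(B(x,\|x\|/2))\le \PROB\{\|X\|\ge \|x\|/2\}$ achieves the same lower bound). But even there you gloss over a nontrivial prerequisite: to conclude $\EXP\{|H_n-H(f)|\}=\infty$ from the divergence of the positive part you must first show that the negative part of $H_n$ has finite expectation, i.e.\ that $\EXP\{(\ln \bar f_{R_{n,1}(X_1)}(X_1))^+\}<\infty$ where $\bar f_h(x)=\mu(B(x,h))/\lambda(B(x,h))$. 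This does not follow directly from $\int f(\ln f)^+<\infty$: the averaged density at the random radius must be controlled for all radii simultaneously, which brings in the Hardy--Littlewood maximal function $f^*$, and the paper needs the Fefferman--Stein weak $(1,1)$ inequality to deduce $\int f\ln (f^*/f)\,dx<\infty$ from finiteness of $H(f)$.

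The genuine gap is in the sufficiency direction. You propose to dominate the conditional mean $b_n(x)$ of $\ln\bigl(v_dR_{n,1}(x)^d/\mu(B(x,R_{n,1}(x)))\bigr)$ by an $f$-integrable envelope uniformly in $n$, and your truncation-error control requires the same uniform integrability. A two-sided envelope amounts to integrability of $\sup_{0<h\le R_{2,1}(x)}\ln (1/\bar f_h(x))$ against $f$, which is the quantity $\int f\ln (g^*+1)\,dx$ in the paper's notation --- and the paper explicitly notes that this is the one step requiring compact support; it is not available under only $H(f)<\infty$ and $\EXP\{(\ln \|X\|)^+\}<\infty$. The paper circumvents the need for a two-sided envelope with a one-sided scheme: writing $|a|=2a^+-a$, only $(\ln (\bar f_{R_{n,1}(x)}(x)/f(x)))^+\le \ln (f^*(x)/f(x))$ needs a dominating function (supplied by Fefferman--Stein), while the remaining term $\EXP\{\tilde H_n\}-H(f)$ is treated by proving $\liminf_n\EXP\{\tilde H_n\}\ge H(f)$ and $\limsup_n\EXP\{\tilde H_n\}\le H(f)$ separately, the latter via a split on $R_{n,1}(x)\le r$ versus $R_{n,1}(x)>r$ using $g^*_r(x)=\inf_{0<h\le r}\bar f_h(x)$, a domination of the far-radius piece by the $n=2$ term, and finally $r\downarrow 0$ by monotonicity. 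Without some such one-sided device your plan stalls exactly at the point you yourself flag as delicate. (Your Efron--Stein concentration step is also unnecessary for $L_1$ convergence; the paper reserves that machinery for the almost-sure statement of Theorem~\ref{cons}.)
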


If $\EXP\{(\ln \|X\|)^+\}=\infty$, then maybe the expectations $\EXP\{\ln \|X\|\}$ and $\EXP\{\ln R_{n,1}(X_1)\}$ don't exist.
However, for finite $H(f)$, we show that the expectation $\EXP\{ H_n\}$ is well defined such that it is larger than $-\infty$.

As sufficient condition, (\ref{llog}) appeared in the studies of distribution and density estimates consistent in KL-divergence.
For discrete distributions concentrated to the set of positive integers,
Gy\"orfi,  P\'ali and van der Meulen \cite{GyPavdM94} proved that a distribution with finite Shannon entropy cannot be estimated consistently in KL-divergence.
It means that for any distribution estimate $p_n=(p_{n,1},p_{n,2},\dots )$ there exist a distribution $p=(p_{1},p_{2},\dots )$ with finite Shannon entropy such that  for the KL-divergence
\begin{align*}
KL(p,p_n)
=\sum_{i=1}^{\infty}p_i\ln \frac{p_i}{p_{n,i}}
=\infty
\end{align*}
for all $n$ a.s.
However, under (\ref{llog}) one can construct a distribution estimate $p_n$ such that $KL(p,p_n)\to 0$ a.s.
This positive finding has been generalized to density estimation consistent in KL-divergence.
For the condition (\ref{llog}), one can create a density $g$ of power tail such that the mixture of the ordinary histogram and of this $g$ is consistent in KL-divergence, see Barron, Gy\"orfi and van der Meulen \cite{BaGyvdM92}.

\bigskip

Theorem \ref{weak} is a complete characterization of the consistency in $L_1$.
In fact, we show that there are two cases:
\begin{itemize}
\item
either $\EXP\{(\ln \|X\|)^+\}=\infty$ and  then $\EXP\{H_n\}=\infty$, for all $n$,
\item
or $\EXP\{(\ln \|X\|)^+\}<\infty$ and then $\lim_{n\rightarrow \infty }\EXP\{|H_n-H(f)|\}=0$.
\end{itemize}
In the sequel,  we show an example, where the density is uniform on an unbounded set and $H_n\to\infty$ in probability..

Denote by $\lambda$ the Lebesgue measure and by $\mu$ the distribution of $X$.
For $d=1$, let
\begin{align*}
f=\IND_A,
\end{align*}
where $A=\cup_j A_j$ with disjoint intervals $A_j$ such that $\lambda (A)=1$.
Then, $X$ is uniformly distributed on $A$ and therefore $H(f)=0$.
For $j\ge 1$, let $\Delta_j =\frac{1}{j(j+1)}$ and $a_j=2^{2^j}$.
Set $A=\cup_{j\ge 1} [a_j,a_j+\Delta_j]$.
Then $\lambda (A)=1$. Note that in this example $\EXP\{(\ln \|X\|)^+\}=\infty$, and therefore $\EXP\{H_n\}=\infty$, for all $n$.

\begin{theorem}
\label{noncons}
In this setup, $\lim_{n\rightarrow \infty }H_n=\infty$ in probability.
\end{theorem}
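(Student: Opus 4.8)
The plan is to bound $H_n$ from below by peeling off a single sample that sits alone in a far-away cluster, while controlling all remaining terms crudely. Since $d=1$ gives $v_1=2$, we have
\begin{align*}
H_n = C_E + \frac 1n\sum_{i=1}^n \ln\bigl(2(n-1)R_{n,i}(X_i)\bigr).
\end{align*}
The whole argument rests on two events, each of probability tending to $1$: that some high-index cluster $A_j$ contains exactly one sample (whose nearest neighbour is then at distance $\approx a_j$, an enormous number), and that no two samples are pathologically close (so the other $n-1$ logarithmic terms cannot drag the average down by more than $O(\ln n)$). Write $N_j$ for the number of samples in $A_j$, so $(N_j)_{j\ge1}$ is multinomial with parameters $n$ and $(\Delta_j)_{j\ge1}$.

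First I would dispose of the bulk terms. Since $f=\IND_A\le 1$, for any $\epsilon>0$ we have $\PROB\{|X_1-X_2|<\epsilon\}\le 2\epsilon$, so a union bound over pairs gives $\PROB\{\min_{i\ne k}|X_i-X_k|<n^{-4}\}\le \binom n2\cdot 2n^{-4}\le n^{-2}$. On the complement every nearest-neighbour distance satisfies $R_{n,i}(X_i)\ge n^{-4}$, hence $\ln(2(n-1)R_{n,i}(X_i))\ge \ln(n^{-3})=-3\ln n$ for each $i$.

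The hard part will be producing the isolated far-away sample. I would work in the window $W=\{j: n/2\le j(j+1)\le 2n\}$, which contains $\Theta(\sqrt n)$ indices; for $j\in W$ the mean count $n\Delta_j=n/(j(j+1))$ lies in $[\tfrac12,2]$, so $\PROB\{N_j=1\}=n\Delta_j(1-\Delta_j)^{n-1}\ge c_0>0$ uniformly, whence $Z:=\sum_{j\in W}\IND\{N_j=1\}$ has $\EXP\{Z\}\ge c_0|W|\to\infty$. A direct computation with the multinomial law, using $(1-\Delta_j)(1-\Delta_k)\ge 1-\Delta_j-\Delta_k$ together with $\Delta_j+\Delta_k=O(1/n)$ on $W$, shows that for $j\ne k$ the covariance of $\IND\{N_j=1\}$ and $\IND\{N_k=1\}$ is at most $\tfrac Cn\PROB\{N_j=1\}\PROB\{N_k=1\}$, so that $\Var(Z)\le \EXP\{Z\}+\tfrac Cn\EXP\{Z\}^2=O(\EXP\{Z\})$. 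Chebyshev's inequality then yields $\PROB\{Z=0\}\le \Var(Z)/\EXP\{Z\}^2\to 0$. This covariance bound, which amounts to showing the singleton events are essentially negatively correlated, is the main technical obstacle; a clean alternative is to Poissonize the sample size, after which the $N_j$ are independent Poisson and the events are literally independent.

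Finally I would combine the two events. On $\{Z\ge1\}$ choose $j\in W$ with $N_j=1$ and let $X_{i_0}$ be the lone sample in $A_j$; since every other sample lies in a different cluster, $R_{n,i_0}(X_{i_0})\ge g_j$, the distance from $A_j$ to its nearest neighbouring cluster, and $g_j\ge a_j-a_{j-1}-\Delta_{j-1}\ge a_j/2$ for large $n$ because $a_{j-1}=\sqrt{a_j}$. Hence $\ln(2(n-1)R_{n,i_0}(X_{i_0}))\ge \ln a_j=2^j\ln 2\ge 2^{\sqrt{n/2}}\ln 2$. Splitting off this single term and bounding the other $n-1$ terms by $-3\ln n$ gives, on the intersection of the two events (probability $\to1$),
\begin{align*}
H_n\ge C_E-3\ln n+\frac{2^{\sqrt{n/2}}\ln 2}{n}.
\end{align*}
Because $2^{\sqrt{n/2}}/n$ tends to infinity far faster than $3\ln n$, the right-hand side diverges, which proves $H_n\to\infty$ in probability. \qed
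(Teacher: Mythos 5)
Your proof is correct, and it reaches the conclusion by a genuinely different route than the paper. The paper also isolates one far-away singleton sample plus a crude bound on the rest, but it implements both halves differently: for the bulk terms it couples the sample to $n$ i.i.d.\ uniforms on $[0,1]$ (by ``pushing the intervals of $A$ together'') and invokes the classical fact that the minimal $1$-spacing is $E/n^2(1+o_P(1))$ with $E$ exponential, whereas you use the elementary union bound $\PROB\{\min_{i\ne k}|X_i-X_k|<n^{-4}\}\le n^{-2}$ coming from $f\le 1$; and for the isolated far point the paper takes the sample with the \emph{maximal} label $Y_i$, showing $\max_i Y_i\ge\sqrt n$ eventually a.s.\ and that the top two labels are distinct with probability tending to one, whereas you run a second-moment argument on singleton occupancy counts $N_j$ over the $\Theta(\sqrt n)$ cells with $j(j+1)\in[n/2,2n]$, where your covariance estimate (or, as you note, Poissonization) gives $\PROB\{Z=0\}\to 0$. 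Both routes produce a lone point in a cluster $A_j$ with $j\gtrsim\sqrt n$, whose nearest-neighbour distance is $\ge a_j/2$ because $a_{j-1}=\sqrt{a_j}$, and hence the same divergence rate $2^{c\sqrt n}/n-O(\ln n)\to\infty$. Your version is more self-contained (no appeal to spacings theory) at the price of the multinomial covariance computation, which you carry out correctly; the paper's is shorter given the cited order-statistics facts. The only quibble is that $j(j+1)\ge n/2$ gives $j\ge\sqrt{n/2}-1$ rather than $j\ge\sqrt{n/2}$, so your lower bound should read $2^{\sqrt{n/2}-1}\ln 2$; this is immaterial to the conclusion.
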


\section{Proofs}

\begin{proof}[Proof of Theorem \ref{cons}]
Put
\begin{align*}
\bar f_h(x)=\frac{\mu(B(x,h))}{\lambda(B(x,h))}=\frac{\int_{B(x,h)}f(z)dz}{\lambda(B(x,h))},
\end{align*}
where $B(x,h)$ stands for the sphere centered at $x$ and having the radius $r$.
Then,
\begin{align*}
&H_n-H(f)\\
&=
\frac 1n \sum_{i=1}^{n}\ln ((n-1)\lambda(B(X_i,R_{n,i}(X_i))))+ C_E-H(f)\\
&=
-\frac 1n \sum_{i=1}^{n}\ln \frac{\mu(B(X_i,R_{n,i}(X_i)))}{\lambda(B(X_i,R_{n,i}(X_i)))}-H(f)+ \frac 1n \sum_{i=1}^{n}\ln ((n-1)\mu(B(X_i,R_{n,i}(X_i))))+C_E\\
&=
\tilde H_n-H(f)+M_n+C_E,
\end{align*}
where
\begin{align*}
\tilde H_n
&=
-\frac 1n \sum_{i=1}^{n}\ln \bar f_{R_{n,i}(X_i)}(X_i),
\end{align*}
and
\begin{align*}
M_n
&=
 \frac 1n \sum_{i=1}^{n}\ln ((n-1)\mu(B(X_i,R_{n,i}(X_i)))).
\end{align*}
Biau and Devroye \cite{BiDe15} showed that the distribution of
$M_n$ does not depend on the density $f$, and $\EXP\{M_n\}=-C_E+O(1/n)$ and $\Var (M_n)=O(1/n)$.
The problem left is to show
\begin{align}
\label{AS2}
M_n-\EXP\{M_n\}
\to 0
\end{align}
a.s. and
\begin{align}
\label{AS1}
\tilde H_n
\to H(f)
\end{align}
a.s.\\
The proof of (\ref{AS2}) relies on the following extension of the Efron-Stein inequality
for the centered higher moments:
\begin{lemma}
\label{exp}
(Devroye et al.\ \cite{DeGyLuWa18})
Let $Z=(Z_1,\ldots,Z_n)$ be a collection of independent random
variables taking values in some measurable set $A$ and denote by
$Z^{(i)}=(Z_1,\ldots,Z_{i-1},Z_{i+1},\ldots,Z_n)$ the collection with
the $i$-th random variable dropped. Let $f:A^n\to \R$ be  a measurable real-valued function and the function $g_i:A^{n-1}\to \R$ is obtained from $f$ by dropping the $i$-th argument, $i=1,\dots ,n$.
Then for any integer $q\ge 1$,
\begin{align}
&\EXP \left[(f(Z)-\EXP f(Z))^{2q} \right]
\le (cq)^q\EXP \left[ \left(\sum_{i=1}^n \left(f(Z)-g_i(Z^{(i)})\right)^2\right)^{q}\right]\nonumber\\
&\qquad + (cq)^q\EXP \left[ \left(\sum_{i=1}^n \EXP\left[\left(f(Z)-g_i(Z^{(i)})\right)^2\mid Z_1,\dots ,Z_{i-1},Z_{i+1}, \dots ,Z_n\right]\right)^{q}\right],
\label{Wal}
\end{align}
with a universal constant $c<5.1$.
\end{lemma}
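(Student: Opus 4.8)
The plan is to derive (\ref{Wal}) from one-sided moment inequalities for the positive and negative parts of $f(Z)-\EXP f(Z)$, obtained by the entropy (Efron--Stein) method. Write $D_i=f(Z)-g_i(Z^{(i)})$ for the jackknife increments and set
\[
V=\sum_{i=1}^n D_i^2,\qquad \overline V=\sum_{i=1}^n \EXP\big[D_i^2\mid Z^{(i)}\big],
\]
so that the two terms on the right-hand side of (\ref{Wal}) are precisely $(cq)^q\EXP[V^q]$ and $(cq)^q\EXP[\overline V^{\,q}]$. Since
\[
\EXP\big[(f(Z)-\EXP f(Z))^{2q}\big]=\EXP\big[(f(Z)-\EXP f(Z))_+^{2q}\big]+\EXP\big[(f(Z)-\EXP f(Z))_-^{2q}\big],
\]
it suffices to bound each tail moment separately by $(cq)^q$ times a mixture of $\EXP[V^q]$ and $\EXP[\overline V^{\,q}]$.

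The engine is the sub-additivity (tensorization) of $\Phi$-entropy. For the convex functions $\Phi(x)=x^\theta$, $1<\theta\le 2$, and any nonnegative $Y$ measurable in $Z$,
\[
\EXP[\Phi(Y)]-\Phi(\EXP Y)\ \le\ \sum_{i=1}^n\EXP\big[\EXP_i\Phi(Y)-\Phi(\EXP_i Y)\big],
\]
where $\EXP_i$ integrates out only the $i$-th coordinate. Applying this with $Y=(f(Z)-\EXP f(Z))_+$ and estimating each one-coordinate $\Phi$-entropy by the local fluctuation of $f$ in the $i$-th variable — which is at most $|D_i|$, because $g_i(Z^{(i)})$ does not depend on $Z_i$ and is therefore an admissible centering — converts the inequality into a self-improving recursion, or equivalently a differential inequality for $p\mapsto\log\|(f-\EXP f)_+\|_p$. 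Integrating it in $p$ up to $p=2q$ yields the sub-Gaussian-in-$p$ growth factor $\sqrt{cq}$ and produces, on the right, both the conditional sensitivity $\overline V$ (entering as a ``predictable variance'' proxy) and the raw sensitivity $V$ (entering from the control of the increments); the negative tail is treated identically.

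The step I expect to be the main obstacle is the quantitative bookkeeping: keeping the universal constant down to $c<5.1$ while preserving the clean ``squares-only'' shape of (\ref{Wal}). The danger is that the natural way to control the increments — passing to $\max_i|D_i|$ and then to a martingale Rosenthal inequality — introduces a genuinely higher-order term $\sum_i\EXP[D_i^{2q}]$ and destroys the form of (\ref{Wal}). Staying inside the $\Phi$-entropy framework, choosing $\theta$ optimally, and using only elementary inequalities (of the type $(a+b)^2\le 2a^2+2b^2$) to split the per-coordinate entropies into a $\overline V$ part and a $V$ part is what simultaneously gives the correct two-term structure and an explicit, small constant. A secondary point is to reconcile the ``drop the $i$-th argument'' increments used here with the ``replace by an independent copy'' increments in which tensorization is usually phrased; this costs only absolute constants, since $g_i(Z^{(i)})$ is $Z^{(i)}$-measurable and hence a legitimate centering in the variational formula for the one-coordinate $\Phi$-entropy.
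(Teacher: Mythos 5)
The paper offers no proof of this lemma: it is quoted as an external result from Devroye, Gy\"orfi, Lugosi and Walk \cite{DeGyLuWa18}, which in turn derives it from the moment inequalities of Boucheron, Bousquet, Lugosi and Massart for functions of independent random variables. So there is no internal argument to compare yours against; measured against that source, your plan follows the correct route --- split $f(Z)-\EXP f(Z)$ into positive and negative parts, tensorize the $\Phi$-entropy for $\Phi(x)=x^{\theta}$ with $1<\theta\le 2$, convert the per-coordinate entropies into a recursion on the moments $\EXP[(f(Z)-\EXP f(Z))_{\pm}^{p}]$, and iterate up to $p=2q$. The two terms on the right-hand side of (\ref{Wal}) do indeed arise from the separate treatment of the two tails, one yielding the conditional quantity $\sum_i\EXP[D_i^2\mid Z^{(i)}]$ and the other the raw $\sum_i D_i^2$, where $D_i=f(Z)-g_i(Z^{(i)})$ in your notation.

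As written, however, this is a programme rather than a proof, and the steps you defer are the ones carrying all the content. The bound on the one-coordinate $\Phi$-entropy of $(f(Z)-\EXP f(Z))_+$ in terms of $|D_i|$ is not a one-liner: the variational formula does permit centering by the $Z^{(i)}$-measurable quantity $g_i(Z^{(i)})$, but the interaction of the positive-part truncation with the sign of $D_i$ is precisely where the conditional and unconditional variance proxies separate, and where a spurious $\max_i D_i^2$ or $\sum_i\EXP[D_i^{2q}]$ term threatens the clean two-term form you need. Likewise, the explicit constant $c<5.1$ only emerges from actually solving the moment recursion by induction on $q$ with a specific choice of $\theta$. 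Until those computations are carried out --- or the result is simply cited, as the authors of this paper do --- inequality (\ref{Wal}) is not established by your argument.
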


For the term $M_n-\EXP\{M_n\}$,
define $M_n^{(i)}$ as $M_n$ without the $i$-th term. We apply Lemma \ref{exp} with $q=2$:
\begin{align*}
\EXP \left[\left( M_n-\EXP\{M_n\}  \right)^{4} \right]
&\le
(c2)^2\EXP \left[ \left(\sum_{i=1}^n \left(M_n-M_n^{(i)}\right)^2\right)^{2}\right]\\
&\quad + (c2)^2\EXP \left[ \left(\sum_{i=1}^n \EXP\left[\left(M_n-M_n^{(i)}\right)^2\mid X_1,\dots ,X_{i-1},X_{i+1}, \dots ,X_n\right]\right)^{2}\right]\\
&\le
2^3c^2n^2\EXP \left[  \left(M_n-M_n^{(n)}\right)^4\right]\\
&=
2^3c^2n^2\EXP \left[  \left(M_n-\frac{n-1}{n}M_{n-1}\right)^4\right].
\end{align*}
Noting that
\begin{align*}
&\left(M_n-\frac{n-1}{n}M_{n-1}\right)^4\\
&=
\frac{1}{n^4} \left(\sum_{i=1}^{n}\ln ((n-1)\mu(B(X_i,R_{n,i}(X_i))))-\sum_{i=1}^{n-1}\ln ((n-2)\mu(B(X_i,R_{n-1,i}(X_i))))\right)^4\\
&=
\frac{1}{n^4} \Big(-\ln ((n-1)\mu(B(X_n,R_{n,n}(X_n))))+(n-1)\ln \frac{ n-2}{ n-1}\\
&\quad +\sum_{i=1}^{n-1}\ln \frac{ \mu(B(X_i,R_{n-1,i}(X_i)))}{\mu(B(X_i,R_{n,i}(X_i)))}\IND_{R_{n,i}(X_i)<R_{n-1,i}(X_i) }\Big)^4,
\end{align*}
the $c_r$-inequality and  Jensen's inequality imply that
\begin{align*}
\EXP \left[\left( M_n-\EXP\{M_n\}  \right)^{4} \right]
&\le
\frac{2^3 3^3c^2}{n^2}\Big(
\EXP \left[\left( \ln ((n-1)\mu(B(X_n,R_{n,n}(X_n))))\right)^{4} \right]
+1\\
&+\EXP \left[\left(\sum_{i=1}^{n-1}\ln \frac{ \mu(B(X_i,R_{n-1,i}(X_i)))}{\mu(B(X_i,R_{n,i}(X_i)))}\IND_{R_{n,i}(X_i)<R_{n-1,i}(X_i) } \right)^{4} \right]
\Big)\\
&\le
\frac{6^3 c^2}{n^2}\Big(
\EXP \left[\left( \ln ((n-1)\mu(B(X_n,R_{n,n}(X_n))))\right)^{4} \right]
+1\\
&+\EXP \left[\left(\sum_{i=1}^{n-1}\IND_{R_{n,i}(X_i)<R_{n-1,i}(X_i) } \right)^{3}
\sum_{i=1}^{n-1}\left(\ln \frac{ \mu(B(X_i,R_{n-1,i}(X_i)))}{\mu(B(X_i,R_{n,i}(X_i)))} \right)^{4} \right]
\Big).
\end{align*}
For any $x$, $\mu(B(x,\|x-X_i\|))$ is uniformly distributed on $[0,1]$ (see Section 1.2 in \cite{BiDe15}), and therefore
\begin{align*}
(n-1)\mu(B(X_n,R_{n,n}(X_n)))
&=
(n-1)\mu(B(X_n,\min_{1\le i \le n-1}\|X_i-X_n\|))\\
&=
(n-1)\min_{1\le i \le n-1}\mu(B(X_n,\|X_i-X_n\|)).
\end{align*}
It implies, that for given $X_n$
\begin{align*}
(n-1)\mu(B(X_n,R_{n,n}(X_n)))
&\eL
(n-1)\min_{1\le i \le n-1}U_i,
\end{align*}
where $\eL$ denotes equality in distribution, and $U_1,\dots ,U_{n-1}$ are i.i.d.\ uniform on $[0,1]$.
Thus,
\begin{align*}
\EXP \left[\left( \ln ((n-1)\mu(B(X_n,R_{n,n}(X_n))))\right)^{4} \right]
&=
\EXP \left[\left( \ln \left((n-1)\min_{1\le i \le n-1}U_i\right)\right)^{4} \right]
=O(1).
\end{align*}
Lemma 20.6 in \cite{BiDe15} yields
\begin{align*}
\sum_{i=1}^{n-1}\IND_{R_{n,i}(X_i)<R_{n-1,i}(X_i) }
&=
\sum_{i=1}^{n-1}\IND_{\|X_i-X_n\|<R_{n-1,i}(X_i) }
\le \gamma_d
\end{align*}
a.s., where $\gamma_d$ is the minimal number of cones of angle $\pi/6$ that cover $\R^d$.
Thus,
\begin{align*}
\EXP \left[\left( M_n-\EXP\{M_n\}  \right)^{4} \right]
&\le
\frac{6^3 c^2}{n^2}\left(O(1)
+\gamma_d^3(n-1)\EXP \left[\left(\ln \frac{ \mu(B(X_1,R_{n-1,1}(X_1)))}{\mu(B(X_1,R_{n,1}(X_1)))} \right)^{4} \right]\right).
\end{align*}
One has
\begin{align*}
&(n-1)\EXP \left[\left(\ln \frac{ \mu(B(X_1,R_{n-1,1}(X_1)))}{\mu(B(X_1,R_{n,1}(X_1)))} \right)^{4} \right]\\
&=
(n-1)\int_0^{\infty}\PROB \left\{\left(\ln \frac{ \mu(B(X_1,R_{n-1,1}(X_1)))}{\mu(B(X_1,R_{n,1}(X_1)))} \right)^{4}\ge s \right\}ds\\
&=
(n-1)\int_0^{\infty}\EXP \left[\PROB \left\{ \frac{ \mu(B(X_1,R_{n-1,1}(X_1)))}{\mu(B(X_1,\|X_1-X_n\|))} \ge e^{s^{1/4}}\mid X_1,\dots ,X_{n-1} \right\}\right]ds\\
&=
(n-1)\int_0^{\infty}\EXP \left[\PROB \left\{ \mu(B(X_1,R_{n-1,1}(X_1)))e^{-s^{1/4}} \ge \mu(B(X_1,\|X_1-X_n\|))\mid X_1,\dots ,X_{n-1} \right\}\right]ds\\
&=
(n-1)\int_0^{\infty}\EXP \left[  \mu(B(X_1,R_{n-1,1}(X_1)))  \right]e^{-s^{1/4}}ds\\
&\le
\int_0^{\infty}e^{-s^{1/4}}ds.
\end{align*}
These limit relations imply
\begin{align*}
 \EXP \left[\left( M_n-\EXP\{M_n\}  \right)^{4}\right]   =O(1/n^2)~,
\end{align*}
which together with the Borel-Cantelli Lemma yields (\ref{AS2}).\\
In the proof of (\ref{AS1}) we apply Breiman's generalized ergodic theorem (see Lemma 27.2 in  \cite{GyKoKrWa02}):
\begin{lemma}
\label{Breiman}
Let $X_1,X_2,\dots$ be a stationary and ergodic sequence. If $F_n=F_n(\{X_i\})$, $n=1,2,\dots$ are random functions such that
\begin{align}
\label{Br1}
F_n(X_1)\to F(X_1)
\end{align}
a.s. and
\begin{align}
\label{Br2}
\EXP\{\sup_n|F_n(X_1)|\}<\infty,
\end{align}
then
\begin{align}
\label{Br3}
\frac 1n \sum_{i=1}^{n} F_n(X_i)\to \EXP\{F(X_1)\}
\end{align}
a.s.
\end{lemma}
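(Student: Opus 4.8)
The plan is to isolate a main term that the classical pointwise (Birkhoff) ergodic theorem handles directly, and an error term arising from the $n$-dependence of $F_n$, which I would control by a maximal-function truncation. The only ingredient beyond Birkhoff's theorem is the uniform control of $F_n - F$, and that is where the $\sup_n$ hypothesis (\ref{Br2}) does its work.

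First I would check that the limit $F$ is integrable. Since $F_n(X_1)\to F(X_1)$ a.s. by (\ref{Br1}), we have $|F(X_1)|\le \sup_n|F_n(X_1)|$ a.s., and the right-hand side is integrable by (\ref{Br2}); hence $\EXP\{|F(X_1)|\}<\infty$. Birkhoff's ergodic theorem applied to the stationary ergodic sequence and the fixed integrable function $F$ then gives
\begin{align*}
\frac 1n\sum_{i=1}^n F(X_i)\to \EXP\{F(X_1)\}
\end{align*}
a.s., so it remains only to show that the error term $\frac 1n\sum_{i=1}^n\left(F_n(X_i)-F(X_i)\right)$ tends to $0$ a.s.

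Next I would truncate. For a fixed $m\ge 1$ set $G_m=\sup_{k\ge m}|F_k-F|$, which is a fixed (that is, $n$-free) random function of the sequence. For every $n\ge m$ and every index $i$ one has $|F_n(X_i)-F(X_i)|\le G_m(X_i)$, so
\begin{align*}
\limsup_{n\to\infty}\frac 1n\sum_{i=1}^n|F_n(X_i)-F(X_i)|
\le \limsup_{n\to\infty}\frac 1n\sum_{i=1}^n G_m(X_i)
=\EXP\{G_m(X_1)\},
\end{align*}
the last equality being a second application of Birkhoff's theorem to $G_m$, which is integrable because $0\le G_m\le 2\sup_k|F_k|$. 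Finally I would let $m\to\infty$: by (\ref{Br1}) we have $G_m(X_1)\downarrow 0$ a.s., and dominated convergence with the integrable envelope $2\sup_k|F_k(X_1)|$ furnished by (\ref{Br2}) gives $\EXP\{G_m(X_1)\}\to 0$. Since the left-hand side above does not depend on $m$, the error term vanishes, and combining it with the main term yields (\ref{Br3}).

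The main obstacle I anticipate is not the two ergodic-theorem invocations but the uniform-in-$i$ domination $|F_n(X_i)-F(X_i)|\le G_m(X_i)$ for all $n\ge m$ simultaneously. One must fix the convention that $F_n(X_i)$ denotes the single random function $F_n$, built from the whole sequence, evaluated along the orbit at $X_i$, so that the supremum defining $G_m$ may be taken inside the shift and the bound holds for every summand at once. Once this bookkeeping is settled, the truncation cleanly decouples the $n$-dependence from the averaging, and both limits reduce to the ordinary ergodic theorem.
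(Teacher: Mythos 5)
Your proof is correct: this is the standard truncation argument for Breiman's generalized ergodic theorem (Birkhoff for the integrable limit $F$, Birkhoff for the envelope $G_m=\sup_{k\ge m}|F_k-F|$, then monotone/dominated convergence as $m\to\infty$ using the integrable majorant from (\ref{Br2})). The paper itself offers no proof --- it quotes the lemma from Lemma 27.2 of Gy\"orfi et al.\ \cite{GyKoKrWa02} --- and your argument is essentially the one given there, including the correct bookkeeping that $F_n(X_i)$ must be read as $F_n$ composed with the shift so that Birkhoff's theorem applies to $G_m$ along the orbit.
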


If $X'_n(x)$ stands for the second nearest neighbor of $x$ among $X_1,\dots ,X_n$, then $R_{n,i}(X_i)=\|X'_n(X_i)-X_i\|$ and so
\begin{align*}
\bar f_{R_{n,i}(X_i)}(X_i)
&=
 \bar f_{\|X'_n(X_i)-X_i\|}(X_i)
\end{align*}
and
\begin{align*}
\tilde H_n
&=
-\frac 1n \sum_{i=1}^{n}\ln \bar f_{\|X'_n(X_i)-X_i\|}(X_i).
\end{align*}
Therefore, (\ref{AS1}) means that
\begin{align}
\label{AS0}
-\frac 1n \sum_{i=1}^{n}\ln \bar f_{\|X'_n(X_i)-X_i\|}(X_i)
\to H(f)
\end{align}
a.s. Defining
\begin{align*}
F_n(x)=-\ln \bar f_{\|X'_n(x)-x\|}(x)
\end{align*}
and
\begin{align*}
F(x)=-\ln f(x),
\end{align*}
we verify the conditions of  Lemma \ref{Breiman}.
The Lebesgue differentiation theorem (cf.  Theorem 20.18  in \cite{BiDe15}) yields that
\begin{align*}
\lim_{r\downarrow 0}\bar f_{r}(x)
= f(x)
\end{align*}
for $\lambda$-almost all $x$.
The Cover-Hart theorem (cf.  Lemma 2.2 in \cite{BiDe15}) implies
\begin{align*}
\|X'_n(x)-x\|
\to 0
\end{align*}
a.s. for $\mu$-almost all $x$.
As $\mu$ is absolutely continuous with respect to $\lambda$, these limit relations result in
\begin{align*}
\bar f_{\|X'_n(x)-x\|}(x)
\to f(x)
\end{align*}
a.s. for $\mu$-almost all $x$, from which (\ref{Br1}) follows.
Let $L$ denote an upper bound on $\|X\|$.
Introduce the Hardy-Littlewood maximal functions
\begin{align*}
f^*(x)=\sup_{h>0}\bar f_h(x)
\end{align*}
and
\begin{align*}
g^*(x)=\sup_{2L>h>0}\frac{1}{\bar f_h(x)}
\end{align*}
(\ref{l1})  implies that
\begin{align}
\label{M}
\int f(x)\ln (f^*(x)+1)dx<\infty,
\end{align}
while if, in addition,  $X$ is bounded, then
\begin{align}
\label{MM}
\int f(x)\ln (g^*(x)+1)dx<\infty,
\end{align}
see page 82 in \cite{BiDe15}.
Note that (\ref{MM}) is the only item in the proof, where the boundedness of $X$ is used.
Thus,
\begin{align*}
|\ln \bar f_{\|X'_n(x)-x\|}(x)|
&=
(\ln \bar f_{\|X'_n(x)-x\|}(x))^+ + \left(\ln \frac{1}{\bar f_{\|X'_n(x)-x\|}(x)}\right)^+\\
&\le
\ln (f^*(x)+1)+\ln (g^*(x)+1),
\end{align*}
and so (\ref{M}) and (\ref{MM}) result in
\begin{align*}
\EXP\{\sup_n|\ln \bar f_{\|X'_n(X_1)-X_1\|}(X_1)|\}
&\le
\int f(x)\ln (f^*(x)+1)dx+\int f(x)\ln (g^*(x)+1)dx<\infty,
\end{align*}
which yields (\ref{Br2}), and the conditions of Lemma \ref{Breiman} are verified.\\
Note, that similarly to the proof of (\ref{AS2}), we can show the universal strong law of the sum nearest neighbor balls: for any density $f$,
\begin{align*}
\sum_{i=1}^{n} \mu\left(B\left(X_i,\min _{j\ne i, j\le n}\|X_i-X_j\|\right)\right)
&\to 1
\end{align*}
a.s.
\end{proof}

\begin{proof}[Proof of Theorem \ref{weak}]
First we have to show that the expectation $\EXP\{ H_n\}$ and equivalently $\EXP\{\tilde H_n\}$ exist.
Jensen's inequality implies that
\begin{align*}
\EXP\{(\tilde H_n)^-\}
&=
\EXP\left\{\left( -\frac 1n \sum_{i=1}^{n}\ln \bar f_{R_{n,i}(X_i)}(X_i) \right)^-\right\}\\
&\ge
\frac 1n \sum_{i=1}^{n}\EXP\left\{\left( -\ln \bar f_{R_{n,i}(X_i)}(X_i) \right)^-\right\}\\
&=
\EXP\left\{\left( -\ln \bar f_{R_{n,1}(X_1)}(X_1) \right)^-\right\}\\
&\ge
-\int f(x)(\ln f^*(x))^+dx\\
&>
-\infty,
\end{align*}
when
\begin{align}
\label{M2}
\int f(x)\ln \frac{f^*(x)}{f(x)}dx<\infty.
\end{align}
Choose $0<L<\esssup_x f(x)$. Jensen's inequality implies
\begin{align*}
\int f(x)\ln \frac{f^*(x)}{f(x)}\IND_{f(x)> L}dx
\le
\left(\int f(x)\IND_{f(x)> L}dx\right)\ln \frac{\int f^*(x)\IND_{f(x)> L}dx}{\int f(x)\IND_{f(x)> L}dx}
<\infty,
\end{align*}
because (\ref{l1}) together with
\begin{align*}
\lambda(\{x:f(x)> L\})
<\infty
\end{align*}
yields
\begin{align*}
\int f^*(x)\IND_{f(x)> L}dx
<\infty,
\end{align*}
see Fefferman and Stein \cite{FeSt71}.
Furthermore,
\begin{align*}
\int f(x)\ln \frac{f^*(x)}{f(x)}\IND_{f(x)\le L}dx
&=
\int f(x)\ln f^*(x)\IND_{f(x)\le L}dx
-
\int f(x)\ln f(x)\IND_{f(x)\le L}dx\\
&\le
L\int \ln \max\{f^*(x),1\}dx
-
\int f(x)\ln f(x)\IND_{f(x)\le L}dx.
\end{align*}
Fefferman and Stein \cite{FeSt71} proved that
\begin{align}
\label{M4}
\lambda(\{x:f^*(x)>t\})\le \frac ct,
\end{align}
with $t>0$ such that $c$ depends only on the dimension $d$, see also (a) of Lemma 10.47 in \cite{WhZy77}.
By (\ref{M4}),
\begin{align*}
\int \ln \max\{f^*(x),1\}dx
&=
\int_0^{\infty} \lambda(\{x:\ln \max\{f^*(x),1\}>t\})dt\\
&=
\int_0^{\infty} \lambda(\{x: \max\{f^*(x),1\}>e^t\})dt\\
&=
\int_0^{\infty} \lambda(\{x: f^*(x)>e^t\})dt\\
&\le
\int_0^{\infty} \frac{c}{e^t}dt\\
&=c.
\end{align*}
Thus, (\ref{M2}) is verified and so we proved that $\EXP\{\tilde H_n\}$ exists.\\
{\sc Assume that $\EXP\{(\ln \|X\|)^+\}=\infty$.} Then, for any $x\in \R^d$, $\EXP\{(\ln \|X-x\|)^+\}=\infty$ and therefore $\EXP\{(\ln  R_{2,1}(X_1))^+\}=\EXP\{(\ln \|X_1-X_2\|)^+\}=\infty$.
Next we show that  $\EXP\{(\ln  R_{n,1}(X_1))^+\}=\EXP\{\min_{2\le i \le n}(\ln \|X_1-X_i\|)^+\}=\infty$, too:
Find $r$ such that $\mu(B(0,r))=1/2$.
Then
\begin{align*}
\EXP\left\{ \left(\ln R_{n,1}(X_1)\right)^+  \right\}
&\ge
\EXP\left\{ \left(\ln \frac{\|X_1\|}{2}\right)^+ \IND_{\|X_1\|\ge 2r} \IND_{X_2,\dots ,X_n\in B(0,r)}\right\}\\
&=
\frac{1}{2^{n-1}}\EXP\left\{ \left(\ln \frac{\|X_1\|}{2}\right)^+ \IND_{\|X_1\|\ge 2r}\right\}\\
&=
\infty.
\end{align*}
Thus,
\begin{align}
\label{Hinf}
\EXP\{\tilde H_n\}
&=
-\int\EXP\left\{ \ln \bar f_{R_{n,1}(x)}(x) \right\}f(x)dx\nonumber\\
&\ge
\int\EXP\left\{ \left(\ln \frac{\lambda(B(x,R_{n,1}(x)))}{\mu(B(x,R_{n,1}(x)))}\right)^+  \right\}f(x)dx\nonumber\\
&\ge
\int\EXP\left\{ \left(\ln \lambda(B(x,R_{n,1}(x)))\right)^+   \right\}f(x)dx\nonumber\\
&\ge
d\EXP\left\{ \left(\ln R_{n,1}(X_1)\right)^+  \right\}\nonumber\\
&=
\infty,
\end{align}
which yields the necessary part of the theorem:
\begin{align*}
\EXP\{|\tilde H_n-H(f)|\}
&=
\infty.
\end{align*}
{\sc Assume that $\EXP\{(\ln \|X\|)^+\}<\infty$.}
Notice that
\begin{align}
\label{Vinf}
\EXP\{ H_n\}
&=
\EXP\{ \ln ((n-1)R_{n,1}(X_1)^dv_d)\}+ C_E\nonumber\\
&\le
\ln (n-1)+\EXP\{ \ln (R_{2,1}(X_1)^dv_d)\}+ C_E\nonumber\\
&\le
\ln (n-1)+\EXP\{ (\ln (\|X_2-X_1\|^dv_d))^+\}+ C_E\nonumber\\
&<
\infty.
\end{align}
(\ref{Hinf}) and (\ref{Vinf}) means that $\EXP\{ H_n\}<\infty$ iff (\ref{llog}) holds.
By the proof of Theorem \ref{cons},
(\ref{L1}) is equivalent to
\begin{align}
\label{tL1}
\EXP\{|\tilde H_n-H(f)|\}
&\to 0.
\end{align}
Note that for $a\in \R$, $|a|=2a^+-a$ and thus, one has
\begin{align*}
\EXP\{|\tilde H_n-H(f)|\}
&\le
\int\EXP\left\{\left| \ln \frac{\bar f_{R_{n,1}(x)}(x)}{f(x)} \right|\right\}f(x)dx\\
&=
2\int\EXP\left\{\left( \ln \frac{\bar f_{R_{n,1}(x)}(x)}{f(x)} \right)^+\right\}f(x)dx
+\EXP\{\tilde H_n\}-H(f).
\end{align*}
We show that
\begin{align}
\label{w1}
\int\EXP\left\{\left( \ln \frac{\bar f_{R_{n,1}(x)}(x)}{f(x)} \right)^+\right\}f(x)dx
\to 0
\end{align}
and
\begin{align}
\label{w2}
\EXP\{\tilde H_n\}
=
-\int\EXP\left\{ \ln \bar f_{R_{n,1}(x)}(x) \right\}f(x)dx
\to H(f).
\end{align}
Concerning (\ref{w1}), we have a domination:
\begin{align*}
\left( \ln \frac{\bar f_{R_{n,1}(x)}(x)}{f(x)} \right)^+
&\le
\left( \ln \frac{f^*(x)}{f(x)} \right)^+
=
\ln \frac{f^*(x)}{f(x)},
\end{align*}
and therefore (\ref{M2}) and the pointwise convergence yield (\ref{w1}).\\
With respect to (\ref{w2}), note that (\ref{w1}) implies that
\begin{align*}
(H(f)-\EXP\{\tilde H_n\})^+
&\le
\int\EXP\left\{\left( \ln \frac{\bar f_{R_{n,1}(x)}(x)}{f(x)} \right)^+\right\}f(x)dx
\to 0
\end{align*}
and therefore
\begin{align*}
\liminf_n\EXP\{\tilde H_n\}
\ge H(f).
\end{align*}
So, we need to show
\begin{align}
\label{tH0}
\limsup_n\EXP\{\tilde H_n\}
\le H(f).
\end{align}
For
\begin{align*}
g^*_r(x)
=\inf_{0<h\le r}\bar f_{h}(x),
\end{align*}
we have that
\begin{align*}
\EXP\{\tilde H_n\}
&=
-\EXP\left\{\int f(x)\ln \bar f_{R_{n,1}(x)}(x) dx\right\}\\
&=
-\EXP\left\{\int \IND_{R_{n,1}(x)\le r}f(x)\ln \bar f_{R_{n,1}(x)}(x) dx\right\}
-\EXP\left\{\int \IND_{R_{n,1}(x)> r}f(x)\ln \bar f_{R_{n,1}(x)}(x) dx\right\}\\
&\le
-\int f(x)\ln g^*_r(x) dx
+\EXP\left\{\int f(x)\left(\ln \frac{\lambda(B(x,R_{n,1}(x)))}{\mu(B(x,r))}\right)^+ dx\right\}.
\end{align*}
The integrand of the last term tends to $0$ a.s. for $\mu$-almost all $x$ and the convergence is dominated by
\begin{align*}
\EXP\left\{\int f(x)\left(\ln \frac{\lambda(B(x,R_{2,1}(x)))}{\mu(B(x,r))}\right)^+ dx\right\}<\infty,
\end{align*}
which follows from
\begin{align*}
\EXP\{\tilde H_2\}
&=
\EXP\left\{\int f(x) \ln \frac{\lambda(B(x,R_{2,1}(x)))}{\mu(B(x,R_{2,1}(x)))} dx\right\}
<\infty.
\end{align*}
Thus, by the dominated convergence theorem
\begin{align*}
\EXP\left\{\int f(x)\left(\ln \frac{\lambda(B(x,R_{n,1}(x)))}{\mu(B(x,r))}\right)^+ dx\right\}
\to 0,
\end{align*}
and so
\begin{align*}
\limsup_n\EXP\{\tilde H_n\}
&\le
-\int f(x)\ln g^*_r(x) dx.
\end{align*}
Therefore,
\begin{align*}
\limsup_n\EXP\{\tilde H_n\}
&\le
-\sup_{0<r}\int f(x)\ln g^*_r(x) dx
=
-\lim_{r\downarrow 0}\int f(x)\ln g^*_r(x) dx
=H(f).
\end{align*}
Thus, (\ref{tH0}) is verified and so the proof of the sufficient part of the theorem is complete.
\end{proof}

\begin{proof}[Proof of Theorem \ref{noncons}]
The data can be represented as follows: let $Y_1,\dots ,Y_n$ be i.i.d. such that
\begin{align*}
\PROB\{Y_i=j\}=\Delta_j.
\end{align*}
Let $U_1,\dots ,U_n$ be i.i.d. uniform on $[0,1]$.
Set
\begin{align*}
X_i=a_{Y_i}+\Delta_{Y_i}U_i.
\end{align*}
We will recall two things from the theory of order statistics:
\begin{itemize}
\item[(i)]
If $U_1,\dots ,U_n$ are i.i.d. uniform on $[0,1]$, then the smallest neighbor distance $Z^*_n$ (the smallest 1-spacings) satisfies
\begin{align*}
Z^*_n\to E
\end{align*}
in distribution, where $E$ is standard exponential.
\item[(ii)]
With probability one,
\begin{align*}
\max_{1\le i\le n}Y_i\ge n^{1/2}
\end{align*}
except finitely often.\\
\proof
\begin{align*}
\PROB\left\{\max_{1\le i\le n}Y_i\le n^{1/2}\right\}
&=
(1-\PROB\{Y_i> n^{1/2}\})^n
\le e^{-n^{1/2}}.
\end{align*}
Apply Borel-Cantelli.
\item[(iii)]
If $Y^{**},Y^*$ are the largest and second largest $Y_i$'s, then
\begin{align*}
\PROB\{Y^{**}=Y^*\}
&\to 0.
\end{align*}
\proof
\begin{align*}
\PROB\{Y^{**}=Y^*\}
&\le
\binom n2 \PROB\left\{Y_1=Y_2\ge \max_{3\le i\le n}Y_i\right\}\\
&\le
n^2\sum_{i=1}^{\infty}\Delta_i^2(1-1/(i+1))^{n-2}\\
&\le
n^2\sum_{i\le n^{5/6}}\Delta_i^2e^{-(n-2)/(i+1)}
+
n^2\sum_{i>n^{5/6}}\Delta_i^2.
\end{align*}
The first term on the right hand side is less than
\begin{align*}
n^2\sum_{i\le n^{5/6}}e^{-(n-2)/(n^{5/6}+1)}
&=
e^{-n^{1/6}(1+o(1))}
\to 0,
\end{align*}
while the second term on the right hand side is
\begin{align*}
O(n^2/n^{5/6})
&=O(1/\sqrt{n})
\to 0.
\end{align*}
\end{itemize}
For the sake of simplicity, consider the negative of the estimate in (\ref{NN}) without the bias correction:
\begin{align}
\label{lNN}
\ell_n=\frac 1n \sum_{i=1}^{n}\log_2 \frac{1}{nZ_{i}},
\end{align}
where $Z_{i}=\min _{j\ne i, j\le n}|X_i-X_j|$.
We show that $\ell_n\to -\infty$ in probability, which implies the theorem.
Let $Y^{**}$ and $Y^*$ be the $Y$-values of the largest and second largest $X_i$'s.
Let $Z^{**}$ be the nearest neighbor distance for to largest $X_i$'s.
This is also the distance between the two largest $X_i$'s.
If $Y^{**}\ne Y^*$, then
\begin{align*}
Z^{**}
&\ge a_{Y^{**}}-a_{Y^{*}}-\Delta_{Y^{*}}
\ge \frac 12 a_{Y^{**}}
=
\frac 12 2^{2^{Y^{**} }}.
\end{align*}
Furthermore, all other nearest neighbor distances are $\ge$ (in distribution) the minimal distance between $n$ uniform order statistics
(by pushing the intervals of $A$ together), and this is $(1/n^2)$ times something that tends in law to a standard exponential $E$.
So, by (ii)
\begin{align*}
\ell_n
&\le
\frac 1n \log_2 \left[ \frac{2}{2^{2^{Y^{**} }} }\right]+\log_2 \left[ \frac{n}{E+o_P(1) }\right]\\
&=
-\frac{2^{Y^{**}}-1 }{n}+O_P(\log_2 n)\\
&\le
-\frac{1}{n}\IND_{Y^{**}\le \sqrt{n} }-\frac{2^{\sqrt{n}}-1 }{n}\IND_{Y^{**}> \sqrt{n} }+O_P(\log_2 n).
\end{align*}
Thus,
\begin{align*}
\PROB\left\{\ell_n>-\frac{2^{\sqrt{n}} }{2n}\right\}
&\le
\PROB\{Y^{**}=Y^*\}+ \PROB\{Y^{**}\le \sqrt{n}\}
\to 0,
\end{align*}
where we applied (iii).
\end{proof}


\begin{thebibliography}{10}

\bibitem{BaGyvdM92}
A. R.  Barron,  L.  Gy\"orfi,  E.  C.  van  der  Meulen.
Distribution estimation consistent in  total  variation  and two types of information divergence.
{\em IEEE Trans.  Information Theory}, 38:1437--1454, 1992.

\bibitem{BeDuGyvdM97}
J. Beirlant, E. J. Dudewicz, L. Gy\"orfi, E. C. van der  Meulen.
Nonparametric entropy estimation: an overview,
{\em International J. Mathematical and Statistical Sciences},
6:17--39, 1997.

\bibitem{BeSaYu17}
T. B. Berrett,   R. J. Samworth and M. Yuan.
Efficient multivariate entropy estimation via k-nearest neighbour distances.
{\em Annals of Statistics}, 47:288--318, 2019.

\bibitem{BiDe15}
G. Biau and L.  Devroye.
\emph{Lectures on the Nearest Neighbor Method.}
Springer---Verlag, Cham, 2015.

\bibitem{BiMa95}
L. Birg\'e and P. Massart.
Estimation of integral functionals of a density.
{\em The Annals of Statistics},  23:11--29, 1995.

\bibitem{ChNaSe17}
C. Chesneau, F. Navarro, O. S. Serea.
A note on the adaptive estimation of the differential entropy by wavelet methods.
{\em Commentationes Mathematicae Universitatis Carolinae},  58:87--100, 2017.

\bibitem{DeFo18}
S. Delattre and N. Fournier.
On the Kozachenko--Leonenko entropy estimator.
{\em J. Statistical Planning and Inference}, 185:69-–93, 2017.

\bibitem{DeJu96}
B. Delyon and A. Juditsky.
On minimax wavelet estimators.
{\em Applied Computational Harmonic Analysis}, 3:215--228, 1996.

\bibitem{DeGyLuWa18}
L. Devroye, L. Gy\"orfi, G. Lugosi, H. Walk.
A nearest neighbor estimate of the residual variance.
{\em Electronic Journal of Statistics}, 12:1752--1778, 2018.

\bibitem{DoJoKePi96}
D. L. Donoho, I. M. Johnstone, G. Kerkyacharian and D. Picard.
Density estimation by wavelet thresholding.
{\em Annals of Statistics}, 24:508--539, 1996.

\bibitem{FeSt71}
C. Fefferman, E.M. Stein.
Some maximal inequalities.
{\em Am. J. Math.} 93:107--115, 1971.

\bibitem{GaOhVi17}
W. Gao, S. Oh and P. Viswanath.
Density functional estimators with k-nearest neighbor bandwidths.
In {\em Proceedings of the 2017 IEEE International Symposium on Information Theory (ISIT)}, Aachen, Germany, 25–30 June 2017; pp. 1351--1355, 2017.

\bibitem{GaOhVi16}
W. Gao, S. Oh and P. Viswanath.
Breaking the bandwidth barrier: Geometrical adaptive entropy estimation.
In {\em Proceedings of the Advances in Neural Information Processing Systems 29 (NIPS 2016)}, Barcelona, Spain, 5–10 December 2016; pp. 2460--2468, 2016.

\bibitem{God04}
M. Godavarti.
Convergence of differential entropies.
{\em IEEE Transactions on Information Theory,} 50:1--6,  2004.

\bibitem{GyKoKrWa02}
L. Gy\"orfi, M. Kohler, A. Krzy\.zak and H. Walk.
{\em A Distribution-Free Theory of Nonparametric Regression},
Springer--Verlag, New York, 2002.

\bibitem{GyPavdM94}
L. Gy\"orfi, I. P\'ali and E. C. van der Meulen.
There is  no universal source code  for  infinite  alphabet.
{\em IEEE  Trans. Information Theory}, 40:267--271, 1994.

\bibitem{GyvdM91}
L. Gy\"orfi and E. C. van der Meulen.
On  the nonparametric  estimation   of   entropy   functional.
In {\em  Nonparametric Functional Estimation and Related  Topics},
Ed.  G.  Roussas, Kluwer Academic Publisher, pages 81--95, 1991.

\bibitem{HaMo93}
P. Hall and S. C. Morton.
On the estimation of entropy.
{\em Annals of the Institute of Statistical Mathematics}, 45:69--88, 1993

\bibitem{HaJiWeWu20}
Y. Han, J. Jiao, T. Weissman, Y. Wu.
Optimal rates of entropy estimation over Lipschitz balls.
{\em Ann. Statist.} 48:3228--3250, 2020.

\bibitem{Joe89}
H. Joe.
Estimation of entropy and other functionals of a multivariate density.
{\em Annals of the Institute of Statistical Mathematics},  41:683--697, 1989.

\bibitem{KaKrPoWa15}
K. Kandasamy, A. Krishnamurthy, B. P\'oczos and L. Wasserman.
Nonparametric von Mises estimators for entropies, divergences and mutual informations.
In {\em Advances in Neural Information Processing Systems}, pages 397--405, 2015

\bibitem{KoLe87}
L. F. Kozachenko and N. N. Leonenko.
Sample estimate of entropy of a random vector.
{\em Problems of Information Transmission},
23:95--101, 1987.

\bibitem{KrKaPoWa14}
A. Krishnamurthy, K. Kandasamy, B. P\'oczos and L. Wasserman.
Nonparametric estimation of R\'enyi divergence and friends.
In {\em International Conference on Machine Learning}, pages 919--927, 2014

\bibitem{LoSuBo18}
W. M. Lord, J. Sun  and E. M. Bollt.
Geometric k-nearest neighbor estimation of entropy and mutual information.
{\em Chaos: An Interdisciplinary Journal of Nonlinear Science}, 28, 033114, 2018.

\bibitem{OzUyEr08}
U. Ozertem, I. Uysal and D. Erdogmus.
Continuously differentiable sample-spacing entropy estimates.
{\em IEEE Trans. Neural Networks}, 19:1978--1984, 2008.

\bibitem{PaYa08}
L. Paninski and M. Yajima.
Undersmoothed kernel entropy estimators.
{\em IEEE Transactions on Information Theory}, 54:4384--4388, 2008.

\bibitem{ShZiSc05}
S. Shwartz, M. Zibulevsky and Y. Y. Schechner.
Fast kernel entropy estimation and optimization.
{\em Signal Process.}, 85:1045--1058, 2005.

\bibitem{SiNa10}
J. Silva and S. Narayanan.
Nonproduct data-dependent partitions for mutual information estimation: strong consistency and
applications.
\emph{IEEE Trans. Signal Processing}, 58:3497--3511, 2010.

\bibitem{SiMiHnFeDe03}
H. Singh, N. Misra. V. Hnizdo, A. Fedorowicz and E. Demchuk.
Nearest neighbor estimates of entropy.
{\em Am. J. Math. Manag. Sci.} 23:301--321, 2003.

\bibitem{SiPo16}
S. Singh and B. P\'oczos.
Finite-sample analysis of fixed-k nearest neighbor density functional estimators.
In {\em Advances in Neural Information Processing Systems}, pages 1217--1225, 2016.

\bibitem{SrRaHe12}
K. Sricharan, R. Raich and A. O. Hero.
Estimation of nonlinear functionals of densities with confidence.
{\em IEEE Transactions on Information Theory}, 58:4135--4159, 2012

\bibitem{SrWeHe13}
K. Sricharan, D. Wei and A. O. Hero.
Ensemble estimators for multivariate entropy estimation.
{\em IEEE Transactions on Information Theory}, 59:4374--4388, 2013.

\bibitem{StPl09}
D. Stowell and M. D. Plumbley.
Fast multidimensional entropy estimation by k-d partitioning.
{\em IEEE Signal Processing Letters}, 16:537--540, 2009.

\bibitem{TsvM94}
A. B. Tsybakov and E. C. van der Meulen.
Root-n consistent estimators of entropy for densities with unbounded support.
{\em Scand. J. Statist.}, 23:75--83, 1994.

\bibitem{WaKuVe05}
Q. Wang, S. R. Kulkarni and S. Verdu.
Divergence estimation of continuous distributions based on data-dependent partitions.
\emph{IEEE Trans. Information Theory}, 51:3064--3074, 2005.

\bibitem{WhZy77}
R. L. Wheeden and A. Zygmund.
\emph{Measure and Integral.}
Marcel Dekker, New York, 1977.

\end{thebibliography}
\end{document}